\newcommand{\p}{\partial}
\newcommand{\nn}{\nonumber}
\newcommand{\e}{\epsilon}
\newcommand{\bt}{{\bf t}}
\newcommand{\F}{\mathcal{F}}
\newcommand{\CC}{\mathbb{C}}
\newcommand{\bT}{{\bf T}}
\newcommand{\beq}{\begin{equation}}
\newcommand{\eeq}{\end{equation}}
\numberwithin{equation}{section}
\theoremstyle{plain}
\newtheorem{theorem}{Theorem}
\newtheorem{proposition}{Proposition}
\newtheorem{corollary}{Corollary}
\theoremstyle{definition}
\newtheorem{remark}{Remark}
\newtheorem{example}{Example}
\def\1{\mathbbm{1}}
\begin{document}
\title[Degree zero GW invariants of target curves]{Degree zero Gromov--Witten invariants for smooth curves}
\author{Di Yang}
\address{School of Mathematical Sciences, University of Science and Technology of China, Hefei 230026, P.R. China}
\email{diyang@ustc.edu.cn}
\date{}
\keywords{Gromov--Witten invariant, Virasoro constraint, Hodge integral, loop equation}
\subjclass[2010]{14N35, 53D45, 37K10}
\maketitle

\begin{abstract}
For a smooth projective curve, 
we derive a closed formula for the generating series of its Gromov--Witten invariants in 
genus $g$ and degree zero.
It is known that the calculation of these invariants can be reduced to that of  
the $\lambda_g$ and $\lambda_{g-1}$ 
integrals on the moduli space of stable algebraic curves. 
The closed formula for the $\lambda_g$ integrals is given by the $\lambda_g$ conjecture, proved by 
Faber and Pandharipande. 
We compute in this paper the $\lambda_{g-1}$ integrals via solving the degree 
zero limit of the loop equation associated to the complex projective line. 
\end{abstract}

\section{Introduction}\label{section1}
Let $X$ be a smooth projective variety of complex dimension~$D$, and let $X_{g,n,\beta}$ be the moduli space of 
stable maps of degree $\beta\in H_2(X,\mathbb{Z})/{\rm torsion}$ 
with target~$X$ from curves of genus~$g$ with~$n$ distinct marked points. 
Here, $g,n\ge0$. 
Choose a homogeneous basis $\phi_1 = 1,\phi_2,\dots,\phi_l$ of 
the cohomology ring $H^{*}(X;\mathbb{C})$ 
with $\phi_\alpha \in H^{2q_\alpha}(X;\CC)$, $\alpha=1,\dots,l$, 
where $0=q_1<q_2 \leq q_3 \leq \dots \leq q_{l-1} < q_l = D$. Note that $q_\alpha$ is a half integer if $\phi_\alpha$ is an odd degree class. 
We denote the Poincar\'e pairing on 
$H^{*}(X;\mathbb{C})$ by $\langle\,,\,\rangle$.
The integrals
\beq
\int_{[X_{g,n,\beta}]^{\rm virt}} c_1(\mathcal{L}_1)^{i_1} {\rm ev}_1^*(\phi_{\alpha_1}) \cdots c_1(\mathcal{L}_n)^{i_n} {\rm ev}_n^*(\phi_{\alpha_n}), \quad 
\alpha_1,\dots,\alpha_n=1,\dots,l, \, i_1,\dots,i_n\geq0,
\eeq 
are called {\it Gromov--Witten (GW) invariants of~$X$ of genus~$g$ and degree~$\beta$}. 
Here, ${\rm ev}_a$, $a=1,\dots,n$, is the evaluation map, 
$\mathcal{L}_a$ is the $a$th tautological line bundle on $X_{g,n,\beta}$, 
and $[X_{g,n,\beta}]^{\rm virt}$ is the {\it virtual fundamental class}, which is an element in the Chow ring having the complex dimension 
\beq\label{vdim}
(1-g)(D-3) + n + \langle \beta, c_1(X)\rangle. 
\eeq 
The {\it genus~$g$ free energy} for the GW invariants of~$X$ is defined as the generating series
\beq\label{defF}
\F^X_g({\bf t};q) = \sum_{n,\beta}\sum_{\alpha_1,\dots,\alpha_n \atop i_1,\dots,i_n} \frac{q^\beta t^{\alpha_1,i_1} \cdots t^{\alpha_n,i_n} }{n!} 
\int_{[X_{g,n,\beta}]^{\rm virt}} c_1(\mathcal{L}_1)^{i_1} {\rm ev}_1^*(\phi_{\alpha_1}) \cdots c_1(\mathcal{L}_n)^{i_n} {\rm ev}_n^*(\phi_{\alpha_n}),
\eeq
where ${\bf t}=(t^{\alpha,i})_{\alpha=1,\dots,l,\,i\ge0}$ is an infinite vector of indeterminates, 
and 
\beq
q^\beta=q_1^{m_1} \cdots q_r^{m_r} ~ ({\rm for}~\beta=m_1\beta_1+\dots+m_r\beta_r)
\eeq
is an element of the Novikov ring. Here, $(\beta_1,\dots,\beta_r)$ is a basis of $H_2(X;\mathbb{Z})/{\rm torsion}$. 
The sum 
$$
\sum_{g\ge0} \e^{2g-2} \F^X_g({\bf t};q) =: \F^X({\bf t}; \e, q)
$$
is called the {\it free energy} for the GW invariants of~$X$.
It should be noted that the order of each monomial $t^{\alpha_1,i_1} \cdots t^{\alpha_n,i_n}$ in~\eqref{defF} is important since the odd cohomology classes are considered and that 
in some literature, the monomial $t^{\alpha_1,i_1} \cdots t^{\alpha_n,i_n} $ in~\eqref{defF} is ordered as $t^{\alpha_n,i_n} \cdots t^{\alpha_1,i_1}$.

The interest of this paper is on the calculation of degree zero GW invariants. 
In particular, we will focus on the case 
when the target is a smooth projective curve. 
Before specializing to curves, we will first review what is known about the 
degree zero invariants for a general smooth projective variety $X$.

The moduli space $X_{g,n,0}$ is isomorphic to $\overline{\mathcal{M}}_{g,n} \times X$, 
where $\overline{\mathcal{M}}_{g,n}$ denotes the 
Deligne--Mumford moduli space~\cite{DM69} of stable algebraic curves of 
genus~$g$ with $n$ distinct marked points. With this identification, 
\beq
[X_{g,n,0}]^{\rm virt} = e(E_{g,n}^\vee \boxtimes T_X) \cap [\overline{\mathcal{M}}_{g,n} \times X],
\eeq
where  
$\mathbb{E}_{g,n}$ is the Hodge bundle on $\overline{\mathcal{M}}_{g,n}$, $T_X$ is the tangent bundle of~$X$, 
and $e(\mathbb{E}_{g,n}^{\vee} \boxtimes T_X)$ is the Euler class of the obstruction bundle on $\overline{\mathcal{M}}_{g,n} \times X$. 
Therefore, we have the following formula for the degree zero GW invariants of~$X$:
\beq\label{kme}
\int_{[X_{g,n,0}]^{\rm virt}} c_1(\mathcal{L}_1)^{i_1} {\rm ev}_1^*(\phi_{\alpha_1}) \cdots c_1(\mathcal{L}_n)^{i_n} {\rm ev}_n^*(\phi_{\alpha_n}) = \int_{\overline{\mathcal{M}}_{g,n} \times X} 
 \psi_1^{i_1} \cdots \psi_n^{i_n}  \phi_{\alpha_1}\cdots\phi_{\alpha_n} \cup e(\mathbb{E}_{g,n}^{\vee} \boxtimes T_X).
\eeq
Here $\psi_a$, $a=1,\dots,n$, denotes the first Chern class of the $a$th tautological line bundle on 
$\overline{\mathcal{M}}_{g,n}$. We see from formula~\eqref{kme} that Hodge classes 
on~$\overline{\mathcal{M}}_{g,n}$ enter into the story, and we use 
 $\lambda_j$ ($j=0,\dots,g$) denote the $j$th Chern class of $\mathbb{E}_{g,n}$.
For more details and references about formula~\eqref{kme} see e.g.~\cite{KM}. 

Denote by $\F^X_{g,\,\deg=0}({\bf t})$ the degree zero part of the genus~$g$ free energy $\F^X_g({\bf t};q)$, i.e.,
\beq\label{defF0}
\F^X_{g,\,\deg=0}({\bf t}) = \sum_{n} \sum_{\alpha_1,\dots,\alpha_n \atop i_1,\dots,i_n} \frac{t^{\alpha_1,i_1} \cdots t^{\alpha_n,i_n}}{n!}  
\int_{[X_{g,n,0}]^{\rm virt}} c_1(\mathcal{L}_1)^{i_1} {\rm ev}_1^*(\phi_{\alpha_1}) \cdots c_1(\mathcal{L}_n)^{i_n} {\rm ev}_n^*(\phi_{\alpha_n}).
\eeq
Following~\cite{Du14, GP, KM}, let us apply formula~\eqref{kme} for the computation of $\F^X_{g,\,\deg=0}({\bf t})$. 

In {\bf \textit{genus zero}}, 
\begin{align}
&\int_{[X_{0,n,0}]^{\rm virt}} c_1(\mathcal{L}_1)^{i_1} {\rm ev}_1^*(\phi_{\alpha_1}) \cdots c_1(\mathcal{L}_n)^{i_n} {\rm ev}_n^*(\phi_{\alpha_n}) 
 = \int_{\overline{\mathcal{M}}_{0,n}} \psi_1^{i_1} \cdots \psi_n^{i_n} \int_X  \phi_{\alpha_1}\cdots\phi_{\alpha_n}.
\end{align}
Together with the well-known formula $\int_{\overline{\mathcal{M}}_{0,n}} \psi_1^{i_1} \cdots \psi_n^{i_n}=\binom{n-3}{i_1,\dots,i_n}$, 
one obtains
\beq\label{eq111-710}
\F^X_{g=0,\,\deg=0}({\bf t}) = 
\sum_{n\geq3} \frac1{n(n-1)(n-2)} \sum_{i_1+\cdots+i_n=n-3} \int_X \frac{{\bf t}^{i_1}\cdots{\bf t}^{i_n}}{i_1! \cdots i_n!},
\eeq
where ${\bf t}^i:=\sum_{\alpha} t^{\alpha,i}\phi_\alpha$, $i\geq0$, are the cohomology-valued times.

In {\bf \textit{genus one}}, we know from~\cite{KM} that 
\beq\label{eulerg1}
e(\mathbb{E}_{1,n}^{\vee} \boxtimes T_X) = c_D(\mathbb{E}_{1,n}^{\vee} \boxtimes T_X) =  c_D(X) - \lambda_1 c_{D-1}(X), \quad n\ge1,
\eeq
and so,
\begin{align}
&\int_{[X_{1,n,0}]^{\rm virt}} c_1(\mathcal{L}_1)^{i_1} {\rm ev}_1^*(\phi_{\alpha_1}) \cdots c_1(\mathcal{L}_n)^{i_n} {\rm ev}_n^*(\phi_{\alpha_n}) \nn\\
&\quad = \int_{\overline{\mathcal{M}}_{1,n}} \psi_1^{i_1} \cdots \psi_n^{i_n} \int_X  \phi_{\alpha_1}\cdots\phi_{\alpha_n} c_D(X) - 
\int_{\overline{\mathcal{M}}_{1,n}} \psi_1^{i_1} \cdots \psi_n^{i_n} \lambda_1  \int_X  \phi_{\alpha_1}\cdots\phi_{\alpha_n} c_{D-1}(X).
\end{align}
Combined with explicit computations given in e.g.~\cite{Du14, DLYZ, DY}, it follows that 
\beq
\F^X_{g=1, \,\deg=0}({\bf t}) = \frac1{24} \bigl\langle c_D(X), \log V_1({\bf t}) \bigr\rangle 
- \frac1{24} \bigl\langle c_{D-1}(X), V({\bf t}) \bigr\rangle.
\eeq
Here and below,  for ${\bf T}=(T_0,T_1,T_2,\dots)$,
\beq\label{VT}
V(\bT) := \sum_{n\ge1} \frac1n \sum_{i_1+\dots+i_n=n-1} \frac{T_{i_1} \dots T_{i_n}}{i_1! \dots i_n!},  \quad V_k(\bT):=\frac{\p^k V(\bT)}{\p T_0^k} ~( k\geq0),
\eeq
  and 
 $V_k({\bf t})$, $k\ge0$, are understood as replacing $T_i$ by ${\bf t}^i$, $i\geq0$.

In {\bf \textit{genus bigger than or equal to~two}}, we divide the consideration into several cases. 
For the case when the dimension $D$ is bigger than~3, the virtual dimension~\eqref{vdim} is negative and 
$\F^X_{g,\deg=0}({\bf t})$ vanishes. 
For the case when $D=3$, i.e., $X$ is a {\it threefold}, Getzler and Pandharipande~\cite{GP} obtain 
\beq
e(\mathbb{E}_{g,n}^{\vee} \boxtimes T_X) 
= (-1)^g \bigl(c_3(X)-c_2(X)c_1(X)\bigr) \lambda_g\lambda_{g-1}\lambda_{g-2}, \quad g\ge2.
\eeq
Using this formula and the well-known formula (cf.~\cite{Fa, FP0})
\beq
\int_{\overline{\mathcal{M}}_{g,0}} \lambda_g\lambda_{g-1}\lambda_{g-2} = 
\frac{1}{2(2g-2)!} \frac{|B_{2g-2}|}{2g-2} \frac{|B_{2g}|}{2g}, \quad g\geq2, 
\eeq
the following closed expression for $\F^X_{g, \,\deg=0}({\bf t})$ 
is obtained in~\cite{DLYZ}:
\beq
\F^X_{g, \,\deg=0}({\bf t}) = \frac{(-1)^g}{2(2g-2)!} \frac{|B_{2g-2}|}{2g-2} \frac{|B_{2g}|}{2g} 
\bigl\langle c_3(X)-c_2(X)c_1(X), V_1({\bf t})^{2g-2}\bigr\rangle,
\quad g\geq2.
\eeq
Here and below, $B_k$ denote the Bernoulli numbers. When $D=0$, i.e., $X$ is a {\it point}, the GW invariants of~$X$ can be uniquely determined by the 
celebrated Witten-Kontsevich theorem (cf.~\cite{Konts, Witten}). 
The remaining cases are the $D=1$ case and the $D=2$ case, i.e., $X$ is a {\it curve} or a {\it surface}.
The focus of this paper is the curve case, and we leave the surface case yet to a future publication.

From now on we assume that $X$ is a smooth projective curve of genus~$h$. 
Getzler and Pandharipande~\cite{GP} show that  
\beq
e(\mathbb{E}_{g,n}^{\vee} \boxtimes T_X) = (-1)^g \bigl(\lambda_g - \lambda_{g-1}c_1(X)\bigr), \quad g\geq2
\eeq
(also true for $g=1$ being compared with~\eqref{eulerg1}).
So for all $g\geq1$, we have
\begin{align}
&(-1)^g\int_{[X_{g,n,0}]^{\rm virt}} 
c_1(\mathcal{L}_1)^{i_1} {\rm ev}_1^*(\phi_{\alpha_1}) \cdots c_1(\mathcal{L}_n)^{i_n} {\rm ev}_n^*(\phi_{\alpha_n}) \nn\\
& ~ = \int_{\overline{\mathcal{M}}_{g,n}} \psi_1^{i_1} \cdots \psi_n^{i_n} \lambda_g \int_X  \phi_{\alpha_1}\cdots\phi_{\alpha_n} - 
\int_{\overline{\mathcal{M}}_{g,n}} \psi_1^{i_1} \cdots \psi_n^{i_n} \lambda_{g-1}  \int_X  \phi_{\alpha_1}\cdots\phi_{\alpha_n} c_1(X). \label{curveobs}
\end{align}
In terms of generating series, we have
\begin{align}
(-1)^g\F^X_{g, \,\deg=0}({\bf t})
= \bigl\langle 1, \mathcal{H}_g(\lambda_g;{\bf t})\bigr\rangle -  \bigl\langle c_1(X), \mathcal{H}_g(\lambda_{g-1}; {\bf t})\bigr\rangle, \quad 
g\geq1. \label{genfxh}
\end{align}
Here, for an element $\varphi\in H^*(\overline{\mathcal{M}}_{g,n};\CC)$, and for $\bT=(T_0,T_1,T_2,\dots)$,
\beq\label{genHodgegamma}
\mathcal{H}_g(\varphi; \bT) := 
\sum_n \sum_{i_1,\dots,i_n}\int_{\overline{\mathcal{M}}_{g,n}} \psi_1^{i_1} \cdots \psi_n^{i_n} \varphi \frac{T_{i_1} \dots T_{i_n}}{n!},
\eeq
and $\mathcal{H}_g(\varphi;{\bf t})$, $g\geq1$, are understood by replacing $T_i$ by ${\bf t}^i$, $i\geq0$.
We note that the power series $V(\bT)$ 
given in~\eqref{VT} is just $\p^2 \mathcal{H}_0(1;\bT)/\p T_0^2$.
It remains to compute $\mathcal{H}_g(\lambda_g;\bT)$ and $\mathcal{H}_g(\lambda_{g-1};\bT)$; in other words, we need to compute 
\beq\label{gg1}
\int_{\overline{\mathcal{M}}_{g,n}} \psi_1^{i_1} \cdots \psi_n^{i_n} \lambda_g, 
\qquad \int_{\overline{\mathcal{M}}_{g,n}} \psi_1^{i_1} \cdots \psi_n^{i_n} \lambda_{g-1} .
\eeq

It follows from the $\lambda_g$ conjecture~\cite{GP} that the Hodge integrals in~\eqref{gg1} with~$\lambda_g$ have the explicit 
 expression:
\beq\label{lambdagformu}
\int_{\overline{\mathcal{M}}_{g,n}} \psi_1^{i_1} \cdots \psi_n^{i_n} \lambda_g = 
\binom{2g+n-3}{i_1,\dots,i_n} \frac{2^{2g-1}-1}{2^{2g-1}} \frac{|B_{2g}|}{(2g)!},\quad g\geq1.
\eeq
The $\lambda_g$ conjecture is proved in~\cite{FP1}. In~\cite{DY} it is shown that 
formula~\eqref{lambdagformu} yields 
\beq\label{123}
\mathcal{H}_g (\lambda_g; \bT) = 
\frac{2^{2g-1}-1}{2^{2g-1}} \frac{|B_{2g}|}{(2g)!} V_{2g-2}(\bT), \quad g\geq1.
\eeq 

Computation for the Hodge integrals in~\eqref{gg1} with $\lambda_{g-1}$ is more involved. 
In~\cite{Du14, DLYZ},  
closed expressions for $\mathcal{H}_g (\lambda_{g-1};\bT)$ for the first few values of~$g$ are found; for example,
\begin{align}
&\mathcal{H}_1 (\lambda_0=1;\bT) = \frac1{24} \log V_1, \quad \mathcal{H}_2 (\lambda_1;\bT) = 
\frac1{480}\frac{V_3}{V_1} - \frac{11}{5760} \frac{V_2^2}{V_1^2},\\
& \mathcal{H}_3 (\lambda_2;\bT) = -\frac{19V_2^4}{53760V_1^4} + \frac{151}{207360}\frac{V_2^2V_3}{V_1^3} - \frac{61V_3^2}{322560V_1^2}
-\frac{373V_2V_4}{1451520V_1^2} + \frac{41V_5}{580608V_1}. 
\end{align}
Note that on the right-hand sides, we omitted the arguments~$\bT$ in~$V_m(\bT)$. 

Before proceeding, we recall some notations. A {\it partition}  
is a non-increasing infinite sequence of non-negative integers $\mu=(\mu_1,\mu_2,\dots)$. 
The number of non-zero components of~$\mu$ is called the {\it length} of~$\mu$, denoted by~$\ell(\mu)$. 
The sum $\sum_{i\ge1} \mu_i$ is called the {\it weight} of~$\mu$, denote by~$|\mu|$. 
The set of all partitions is denoted by $\mathcal{P}$, and the set of partitions of weight~$k$ is denoted by $\mathcal{P}_k$. 
A partition~$\mu$ of weight~$k$ is also called a {\it partition of~$k$}. If the length of the partition $\mu$ is positive, 
it is often denoted by $\mu=(\mu_1,\dots,\mu_{\ell(\mu)})$; otherwise, it can be denoted either as $(0)$ or as $(\,)$. 
Denote $\mu+1=(\mu_1+1,\dots,\mu_{\ell(\mu)}+1)$ if $\ell(\mu)>0$, and $(\,)+1=(\,)$ otherwise. 
The expression $m_i(\mu)$ will denote the {\it multiplicity} of~$i$ in~$\mu$, $i\geq1$, and denote 
$m(\mu)!=\prod_{i=1}^{\infty} m_i(\mu)!$. 
For a sequence of indeterminates $(y_0, y_1, y_2,\dots)$, 
 $y_\mu:=\prod_{i=1}^{\ell(\mu)} y_{\mu_i}$ (clearly, $y_{(\,)}=1$).

According to \cite{DLYZ, DY}, for $g\geq1$ there exist
functions $W_g(V_1,\dots,V_{2g-1})$ of $(2g-1)$ variables of the form 
\beq\label{126713}
W_1(V_1) = \frac{\log V_1}{24}, \quad W_g(V_1,\dots, V_{2g-1}) 
=  \sum_{\mu\in \mathcal{P}_{2g-2}} c^g_\mu \frac{V_{\mu+1}}{V_1^{\ell(\mu)}}~ (g\geq2), 
\eeq
such that
\beq
\mathcal{H}_g (\lambda_{g-1};\bT) = W_g(V_1(\bT),\dots,V_{2g-1}(\bT)), \quad g\geq1.
\eeq
Here 
$c^g_\mu$ are constants. For example, $W_2(V_1,V_2,V_3)=\frac1{480}\frac{V_3}{V_1}-\frac{11}{5760} \frac{V_2^2}{V_1^2}$.
It then follows from formula~\eqref{genfxh} the following proposition. 
\begin{proposition}\label{prop1}
For $g\geq1$, the genus~$g$ free energy of~$X$ of degree zero has the expression:
\beq\label{Fprop1}
\F^X_{g, \,\deg=0}({\bf t}) = 
(-1)^g \frac{2^{2g-1}-1}{2^{2g-1}} \frac{|B_{2g}|}{(2g)!} \langle 1, V_{2g-2}(\bt)\rangle - (-1)^g (2-2h) W_g(V_1({\bf P}),\dots, V_{2g-1}({\bf P})).
\eeq 
Here ${\bf P}:=(t^{1,0},t^{1,1},\dots)$.
\end{proposition}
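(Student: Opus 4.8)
The plan is to obtain \eqref{Fprop1} directly from the identities already assembled in the introduction, namely \eqref{genfxh}, the closed form \eqref{123} for $\mathcal{H}_g(\lambda_g;\bT)$, and the representation $\mathcal{H}_g(\lambda_{g-1};\bT)=W_g(V_1(\bT),\dots,V_{2g-1}(\bT))$; the only new ingredient is an elementary analysis of the Poincar\'e pairing against $c_1(X)$ on a curve.

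First I would recall \eqref{genfxh},
\[
(-1)^g \F^X_{g,\,\deg=0}({\bf t}) = \bigl\langle 1, \mathcal{H}_g(\lambda_g;{\bf t})\bigr\rangle - \bigl\langle c_1(X), \mathcal{H}_g(\lambda_{g-1};{\bf t})\bigr\rangle ,
\]
and feed into the first term the expression \eqref{123}, which after the substitution $T_i\mapsto {\bf t}^i$ reads $\mathcal{H}_g(\lambda_g;{\bf t}) = \tfrac{2^{2g-1}-1}{2^{2g-1}}\tfrac{|B_{2g}|}{(2g)!}V_{2g-2}({\bf t})$; pairing with $1$ then produces, up to the overall factor $(-1)^g$, the first summand on the right-hand side of \eqref{Fprop1}. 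For the second term I would write $\mathcal{H}_g(\lambda_{g-1};{\bf t}) = W_g(V_1({\bf t}),\dots,V_{2g-1}({\bf t}))$, observing that the Laurent-in-$V_1$ shape \eqref{126713} of $W_g$ makes sense on a curve because $V_1(\bT)$ has constant term $1$, so $V_1({\bf t})$ is an invertible element of $H^{*}(X;\CC)\otimes\CC[[{\bf t}]]$.

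The \emph{key step} is the evaluation of $\langle c_1(X),\,\cdot\,\rangle$. Since $X$ is a smooth projective curve of genus $h$, the class $c_1(X)$ is $(2-2h)$ times the Poincar\'e dual of a point, so for any $\varphi\in H^{*}(X;\CC)\otimes\CC[[{\bf t}]]$ one has $\langle c_1(X),\varphi\rangle = (2-2h)\,\pi(\varphi)$, where $\pi$ extracts the coefficient of the identity class $\phi_1=1$. The crucial point is that $\pi$ is a homomorphism of ($\mathbb{Z}/2$-graded) rings — immediate because $H^{>0}(X;\CC)$ is a nilpotent ideal for a curve — and that $\pi({\bf t}^i)=t^{1,i}$. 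Hence $\pi(V_k({\bf t}))=V_k({\bf P})$ for all $k$, and applying $\pi$ to the polynomial in $V_{\mu+1}/V_1^{\ell(\mu)}$ defining $W_g$ gives $\pi\bigl(W_g(V_1({\bf t}),\dots,V_{2g-1}({\bf t}))\bigr)=W_g(V_1({\bf P}),\dots,V_{2g-1}({\bf P}))$. Substituting these back and multiplying the whole identity by $(-1)^g$ yields \eqref{Fprop1}.

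I do not anticipate a genuine obstacle: the proposition is essentially a corollary of \eqref{genfxh}, \eqref{123} and the defining property of the functions $W_g$. The only point needing a moment's care is the bookkeeping of the cohomology-valued substitution ${\bf t}^i=\sum_\alpha t^{\alpha,i}\phi_\alpha$ — specifically, checking that projection onto the $\phi_1$-component commutes with forming the nonlinear expressions $V_k$ and $W_g$, which reduces to the multiplicativity of $\pi$ together with the invertibility of $V_1({\bf t})$; the presence of odd cohomology classes is harmless here because each monomial $t^{\alpha,i}\phi_\alpha$ is of even total parity and $\pi$ remains a homomorphism of super-commutative rings.
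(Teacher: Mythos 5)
Your proposal is correct and follows essentially the same route as the paper, which states Proposition~\ref{prop1} as an immediate consequence of \eqref{genfxh}, \eqref{123}, and the defining property \eqref{126713} of $W_g$ without writing out a separate proof. The one step you make explicit --- that $\langle c_1(X),\,\cdot\,\rangle=(2-2h)\,\pi(\cdot)$ with $\pi$ the $\phi_1$-coefficient projection, and that $\pi$ is a ring homomorphism commuting with the nonlinear substitutions in $V_k$ and $W_g$ --- is exactly the bookkeeping the paper leaves implicit, and you carry it out correctly.
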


The goal of this paper is to give a closed formula for $W_g(V_1,\dots,V_{2g-1})$.

Introduce some more notations. 
 The Lagrange number $L(\mu)$ is defined by 
\beq
L(\mu) = \frac{(|\mu|+\ell(\mu))! (-1)^{\ell(\mu)}}{m(\mu)! \prod_{j\ge1} (j+1)!^{m_j(\mu)}}.
\eeq
For $g\geq1$, introduce the rational function $\mathcal{B}_g(\lambda; {\bf V})$ as follows:
\begin{align}
& \mathcal{B}_g(\lambda; {\bf V}) \nn\\
&= \frac{2^{2g-1}-1}{2^{2g-1}} \frac{|B_{2g}|}{(2g)!} \biggl(\frac1{(\lambda-V)^2}\biggr)_{2g-2} 
+ \frac{2^{2g-1}-1}{2^{2g-1}} \frac{|B_{2g}|}{(2g)!} \sum_{k=1}^{2g-2} \binom{2g-2}{k} 
\biggl(\frac1{\lambda-V}\biggr)_{k-1}\biggl(\frac1{\lambda-V}\biggr)_{2g-1-k} \nn\\
& \quad - \frac12\sum_{g_1+g_2=g \atop g_1,g_2\geq1} \frac{2^{2g_1-1}-1}{2^{2g_1-1}} \frac{|B_{2g_1}|}{(2g_1)!} \frac{2^{2g_2-1}-1}{2^{2g_2-1}} 
\frac{|B_{2g_2}|}{(2g_2)!} \biggl(\frac1{\lambda-V}\biggr)_{2g_1-1}\biggl(\frac1{\lambda-V}\biggr)_{2g_2-1}, \label{defBg710}
\end{align}
where ${\bf V}=(V_0=V,V_1,V_2,\dots)$, and for a function $f({\bf V})$ and $r\ge0$, $(f({\bf V}))_r$ means $\p^r (f({\bf V}))$ 
with $\p:=\sum_k V_{k+1} \p/\p V_k$.
Denote $\mathcal{B}_{g,j}={\rm Coef}\bigl((\lambda-V)^{-j-1},\mathcal{B}_g(\lambda;{\bf V})\bigr)$. 
We will prove in Section~\ref{section3} the following theorem. 
\begin{theorem}\label{thm1}
For $g\geq2$, we have
\beq\label{WghomocB}
W_g(V_1,\dots,V_{2g-1}) 
= \frac1{2g-2} \sum_{k=2}^{2g-1} (k-1) V_k \sum_{j=1}^{2g-1} c_{k,j} \mathcal{B}_{g,j}
\eeq
with 
\beq
c_{i,j} := \frac{1}{j!} \sum_{\mu\in \mathcal{P}_{j-i}} \binom{\ell(\mu)+j-1}{i-1} L(\mu)  \frac{V_{\mu+1}}{V_1^{\ell(\mu)+j}}.
\eeq
\end{theorem}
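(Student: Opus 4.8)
The plan is to derive formula \eqref{WghomocB} from the \emph{degree zero limit of the loop equation} associated to $\mathbb{P}^1$, exactly as advertised in the abstract. The starting point is that $\mathcal{H}_g(\lambda_{g-1};\bT)=W_g(V_1,\dots,V_{2g-1})$ together with the $\lambda_g$ formula \eqref{123} assemble into the genus expansion of a generating function whose dependence on a spectral variable $\lambda$ can be packaged into the jet coordinates $\mathbf{V}=(V,V_1,V_2,\dots)$; the object $\mathcal{B}_g(\lambda;\mathbf{V})$ in \eqref{defBg710} is precisely the genus $g$ contribution to the quadratic (loop) combination built from the $\lambda_g$-data: the first line is the ``one-point squared'' term $\bigl((\lambda-V)^{-2}\bigr)_{2g-2}$, the second line collects the descendant insertions $\binom{2g-2}{k}$ coming from splitting the $2g-2$ $\psi$-powers, and the third line is the disconnected/stable-splitting correction with the symmetry factor $\tfrac12$ and the genus decomposition $g_1+g_2=g$. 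The identity to be proved then reads: the loop equation forces $\partial W_g$ (in the $\partial=\sum_k V_{k+1}\partial/\partial V_k$ derivation) to equal a specific combination of $\mathcal{B}_g$ and lower data, and \eqref{WghomocB} is its unique homogeneous solution.

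Concretely, I would proceed in the following steps. \textbf{Step 1.} Recall from \cite{DLYZ, DY} the loop equation for $\mathbb{P}^1$ and take its degree zero limit; after the reduction in Proposition~\ref{prop1} only the $W_g$-part survives, and the equation becomes an identity among rational functions of $\lambda$ whose genus $g$ piece has the schematic form $\mathcal{L}\,W_g = \mathcal{B}_g(\lambda;\mathbf{V})$, where $\mathcal{L}$ is a first-order linear operator encoding the action of the Virasoro/loop operator on the jet variables. \textbf{Step 2.} Expand both sides in the basis $\{(\lambda-V)^{-j-1}\}_{j\ge1}$ of principal parts at $\lambda=V$; the coefficient of $(\lambda-V)^{-j-1}$ on the right is $\mathcal{B}_{g,j}$ by definition, and on the left it produces, via $\partial W_g=\sum_k (\partial W_g/\partial V_k)V_{k+1}$ and the Leibniz rule applied to the homogeneous ansatz \eqref{126713}, a linear system whose entries are controlled by the Lagrange numbers $L(\mu)$. \textbf{Step 3.} Invert that linear system. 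The combinatorial content is that the ``change of basis'' from the jet monomials $V_{\mu+1}/V_1^{\ell(\mu)}$ to the principal parts $(\lambda-V)^{-j-1}$ is governed by Lagrange inversion / the Bürmann formula, which is exactly where the coefficients $c_{i,j}$ with their binomial weight $\binom{\ell(\mu)+j-1}{i-1}$ and the Lagrange numbers enter; the prefactor $\tfrac1{2g-2}$ and the weighting $(k-1)V_k$ come from integrating the Euler (homogeneity) operator, since each $W_g$ for $g\ge2$ is homogeneous of a fixed degree under the grading $\deg V_k=k-1$ and $\mathcal{B}_g$ shifts that degree by one. \textbf{Step 4.} Check the base of the recursion and the matching of the $\tfrac1{2g-2}\sum_{k=2}^{2g-1}$ range against the known $W_2(V_1,V_2,V_3)=\tfrac1{480}\tfrac{V_3}{V_1}-\tfrac{11}{5760}\tfrac{V_2^2}{V_1^2}$, which pins down any overall normalization.

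The main obstacle I anticipate is \textbf{Step 3}: correctly identifying the inverse of the linear map sending the homogeneous jet monomials to the principal-part basis, and in particular proving that the resulting coefficients are precisely the $c_{i,j}$ built from $L(\mu)$. This requires a clean statement of the relevant Lagrange inversion identity adapted to the derivation $\partial=\sum_k V_{k+1}\partial/\partial V_k$ (so that $(\lambda-V)^{-1}$ plays the role of a resolvent whose $\partial$-derivatives generate the $(\lambda-V)^{-j-1}$), together with a verification that the combinatorial factor $\frac{(|\mu|+\ell(\mu))!\,(-1)^{\ell(\mu)}}{m(\mu)!\prod_j(j+1)!^{m_j(\mu)}}$ is exactly what Lagrange inversion produces. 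A secondary technical point is justifying the degree zero limit of the loop equation term by term (convergence of the relevant formal series and the vanishing of higher-degree contributions), but given Proposition~\ref{prop1} and the explicit form \eqref{genfxh} this should be routine. Once Step 3 is in place, Steps 2 and 4 are bookkeeping, and the homogeneity argument of Step 3 simultaneously yields uniqueness of the solution, completing the proof.
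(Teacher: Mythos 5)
Your proposal follows essentially the same route as the paper: take the coefficient of $q^0$ in the Dubrovin--Zhang loop equation to get $\sum_{r=1}^{2g-1}\frac{\p W_g}{\p V_r}\bigl(\frac1{\lambda-V}\bigr)_r=\mathcal{B}_g(\lambda;{\bf V})$, expand in the principal parts $(\lambda-V)^{-j-1}$ to obtain an upper-triangular linear system for the $\p W_g/\p V_r$, invert it to get the coefficients $c_{i,j}$ built from the Lagrange numbers, and recover $W_g$ from its partial derivatives via the homogeneity relation $\sum_{m}(m-1)V_m\,\p W_g/\p V_m=(2g-2)W_g$. The step you flag as the main obstacle (the explicit inversion) is exactly the one the paper also treats as a routine computation, so your outline is a faithful reconstruction of the published argument.
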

Let us briefly describe the idea of the proof. 
Observe from~\eqref{Fprop1} that the functions $W_g$ in~\eqref{Fprop1} are independent of~$X$, 
so we can compute $W_g$ by taking $X=\mathbb{P}^1$. The partition function
$$Z^{\mathbb{P}^1}({\bf t};\e,q) := \exp \bigl(\F^{\mathbb{P}^1}({\bf t};\e,q)\bigr)$$
 is deeply connected with integrable systems \cite{DZ, OP2, OP1}, so one can apply the theory 
 of integrable systems (Lax pairs, Virasoro constraints, etc.)
 to get closed expressions of them; for example, explicit $n$-point functions are 
 obtained in~\cite{DYZ-gwp1, GOP-GWp1} by using the Toda lattices. In this paper,  
 we will employ the loop equation~\cite{DZ} for~$\mathbb{P}^1$.

As a special case of Theorem~\ref{thm1}, we will give in Section~\ref{section4} a simple proof of the following theorem. 

\noindent {\bf Theorem A} (\cite{FP0, GP}). {\it For $g\geq1$, the following formula holds:
\begin{align}
&\int_{\overline{\mathcal{M}}_{g,1}} \psi_1^{2g-1} \lambda_{g-1} \nn\\
&= \frac{2^{2g-1}-1}{2^{2g-1}} \frac{|B_{2g}|}{(2g)!}\sum_{k=1}^{2g-1}\frac1k 
- \frac1{2^{2g-1}(2g-1)!}\sum_{g_1+g_2=g\atop g_1, g_2>0} \bigl(2^{2g_1-1}-1\bigr)\bigl(2^{2g_2-1}-1\bigr) 
\frac{|B_{2g_1}|}{2g_1}\frac{|B_{2g_2}|}{2g_2}. \label{corgm1}
\end{align}}

\smallskip

The paper is organized as follows. In Section~\ref{section2} we review the loop equation for 
GW invariants of~$\mathbb{P}^1$. In Section~\ref{section3} we prove Theorem~\ref{thm1}. 
Some applications are discussed in Section~\ref{section4}. 

\section{Loop equation for GW invariants of~$\mathbb{P}^1$}\label{section2}
In this section, we review the loop equation for GW invariants of the complex projective line, 
that can be derived using the structure of the associated Frobenius manifold 
as a result of the following three properties:
\begin{itemize}
\item [1)] Virasoro constraints for the genus zero free energy;

\item [2)] Existence of jet-variable representation for the higher genera free energies;

\item [3)] Virasoro constraints for the all-genus free energy. 
\end{itemize}

Recall that the Frobenius manifold~\cite{Du96} associated to the GW invariants of~$\mathbb{P}^1$ has the  
potential 
\beq
F = \frac12 (v^1)^2 v^2 + q e^{v^2}.
\eeq
Here $(v^1,v^2)$ is a system of flat coordinates for the invariant flat metric of this Frobenius manifold with 
$\p_{v^1}$ being the unity vector field, and 
we will also use the notation $v=v^1, u=v^2$. 
The {\it principal hierarchy} associated to this Frobenius manifold is a hierarchy of commuting evolutionary
PDEs given by
\begin{align}\label{phdefp1}
\frac{\p v^\alpha}{\p t^{\beta,i}} = 
\sum_{\rho} \eta^{\alpha\rho}\p_x \biggl(\frac{\p \theta_{\beta,i+1}(v,u;q)}{\p v^\rho}\biggr), \quad i\geq0, \, \alpha,\beta=1,2,
\end{align}
where $\eta^{\alpha\rho}=\delta_{\alpha+\rho,3}$, and 
$(\theta_{\alpha,k}(v^1,v^2;q))_{\alpha=1,2, k\geq0}$ 
are holomorphic functions which can be 
 defined via the generating series~\cite{DZ-norm, DZ}
\begin{align}
&\theta_1(v,u;q;z):=\sum_{k\geq0} \theta_{1,k}(v,u;q) z^k
= -2e^{zv}\sum_{m\geq0}\Bigl(\gamma-\frac12 u+\psi(m+1)\Bigr)q^m e^{m u} \frac{z^{2m}}{m!^2}, \label{theta1z}\\
&\theta_2(v,u;q;z):=\sum_{k\geq0} \theta_{2,k}(v,u;q) z^k = z^{-1} \biggl(\sum_{m\geq0} q^m e^{m u+z v} \frac{z^{2m}}{m!^2}-1\biggr), \label{theta2z}
\end{align}
where $\gamma$ is the Euler constant and $\psi$ denotes the digamma function. 
The reader who is familiar with the theory 
of Frobenius manifolds recognizes that the above 
$(\theta_1(v,u;z), \theta_2(v,u;z))$ give a system of the deformed flat coordinates for the Dubrovin connection of the 
Frobenius manifold under consideration. 
It is easy to observe that the $\p/\p t^{1,0}$ flow coincides with~$\p_x$; therefore, we identify $t^{1,0}$ with~$x$. 
We also remind the reader that  
 the integrability of the principal hierarchy~\eqref{phdefp1}
 is guaranteed by the Frobenius manifold structure. Since \eqref{phdefp1} is integrable,  
 one can solve equations in~\eqref{phdefp1} together,  
 yielding solutions of the form $(v=v({\bf t};q), u=u({\bf t};q))$.
Following~\cite{Du96}, define the {\it genus zero two-point correlation functions} $\Omega_{\alpha,i;\beta,j}^{[0]}(v,u)$
by means of the generating series as follows:
\beq\label{two-pt}
\sum_{i,j\geq 0} \Omega_{\alpha,i;\beta,j}^{[0]}(v,u;q) z^i y^j = \frac{1}{z+y} 
\Biggl(\sum_{\rho,\sigma}\frac{\p \theta_\alpha(v,u; q;z)}{\p v^\rho} \eta^{\rho\sigma} \frac{\p \theta_\beta(v,u;q;y)}{\p v^\sigma} 
- \eta_{\alpha\beta} \Biggr),\quad \alpha,\beta=1,2.
\eeq

Solutions~$(v=v({\bf t};q), u=u({\bf t};q))$ 
to the principal hierarchy~\eqref{phdefp1} are characterized by their initial values 
\beq(v({\bf t};q),u({\bf t};q))|_{t^{\alpha,i}=x \delta^{\alpha,1} \delta^{i,0}, \,\alpha=1,2,\, i\geq0}.\eeq
The {\it topological solution} $(v_{\rm top}({\bf t};q),u_{\rm top}({\bf t};q))$ to the principal hierarchy is defined as the 
unique solution subjected to the initial value
\beq
(x,0).
\eeq
It is straightforward to show that this solution satisfies 
\beq
v_{\rm top}({\bf t};q)\big|_{t^{\alpha,i}=0, \, \alpha=1,2, \, i>0} = t^{1,0},\quad 
u_{\rm top}({\bf t};q)\big|_{t^{\alpha,i}=0, \, \alpha=1,2, \, i>0} = t^{2,0}.
\eeq
According to~\cite{Du96}, the topological solution $(v_{\rm top}({\bf t};q),u_{\rm top}({\bf t};q))$
 can alternatively be determined by the following genus zero Euler--Lagrange equation:
\beq\label{ELeq}
 \sum_\alpha \sum_{i} \tilde t^{\alpha,i}  \frac{\p \theta_{\alpha,i}}{\p v^\beta}(v_{\rm top}({\bf t};q), u_{\rm top}({\bf t};q);q) = 0, \quad \beta=1,2,
\eeq
where $\tilde t^{\alpha,i}:=t^{\alpha,i}-\delta^{\alpha,1} \delta^{i,1}$. 

Recall \cite{Du96} (cf.~\cite{DZ-selecta, DZ}) that 
the genus zero free energy of GW invariants of~$\mathbb{P}^1$ has the expression
\beq\label{fp10c}
\F^{\mathbb{P}^1}_0({\bf t};q) = 
\frac12 \sum_{\alpha,\beta} \sum_{i,j} \tilde t^{\alpha,i} \tilde t^{\beta,j}   
\Omega_{\alpha,i;\beta,j}^{[0]}(v_{\rm top}({\bf t};q), u_{\rm top}({\bf t};q);q),
\eeq
which satisfies the following genus zero Virasoro constraints~\cite{DZ-selecta} (cf.~\cite{EX, Gi, LT}):
\beq\label{virap1genus0}
e^{-\e^{-2}\F^{\mathbb{P}^1}_0({\bf t};q)} L_m\Bigl(e^{\e^{-2}\F^{\mathbb{P}^1}_0({\bf t};q)}\Bigr) = {\rm O}(1) \quad (\epsilon\to0), \quad m\geq-1,
\eeq
where $L_m$, $m\geq-1$, are the linear operators given by
\begin{align}
L_{-1} = \, & \sum_{k\geq1}\sum_\alpha \tilde t^{\alpha,k} \frac{\p}{\p t^{\alpha,k-1}} + \frac{t^{1,0}t^{2,0}}{\e^2}, \\
L_m = \, & \e^2 \sum_{k=1}^{m-1} k!(m-k)! \frac{\p^2}{\p t^{2,k-1}\p t^{2,m-k-1}} 
+ \sum_{k\geq1} \frac{(m+k)!}{(k-1)!} \biggl(\tilde t^{1,k}\frac{\p}{\p t^{1,m+k}}+\tilde t^{2,k-1}\frac{\p}{\p t^{2,m+k-1}}\biggr) \nn\\
& + 2 \sum_{k\geq0} A_m(k) \tilde t^{1,k} \frac{\p}{\p t^{2,m+k-1}} + \delta_{m,0} \frac{t^{1,0}t^{1,0}}{\e^2} , \quad m\geq0
\end{align}
with 
\beq
A_m(0)=m!,  \quad A_m(k)=\frac{(m+k)!}{(k-1)!} \sum_{j=k}^{m+k}\frac1j ~ (k>0).
\eeq
These operators $L_m$, $m\geq-1$, satisfy the Virasoro commutation relations: 
\beq
[L_{m_1},L_{m_2}]=(m_1-m_2) L_{m_1+m_2}, \quad m_1,m_2\geq-1.
\eeq

Let us proceed with higher genera. 
According to~\cite{DW, DZ-norm, DZ, EYY, Getzler, Gi},  
the higher genus free energies $\F^{\mathbb{P}^1}_g({\bf t};q)$, $g\geq1$, have the 
$(3g-2)$ property, namely, 
there exist functions 
\beq
F_g=F_g(v, u, v_1, u_1, \dots, v_{3g-2}, u_{3g-2};q), \quad g\geq1,
\eeq 
such that, for $g\ge1$, 
\beq\label{fgfmgequalp1}
\F_g^{\mathbb{P}^1}({\bf t};q) = 
F_g\biggl(v_{\rm top}({\bf t};q), u_{\rm top}({\bf t};q), \frac{\p v_{\rm top}({\bf t};q)}{\p x}, \frac{\p u_{\rm top}({\bf t};q)}{\p x}, \dots, 
\frac{\p^{3g-2} v_{\rm top}({\bf t};q)}{\p x^{3g-2}},
\frac{\p^{3g-2} u_{\rm top}({\bf t};q)}{\p x^{3g-2}}; q\biggr).
\eeq
Moreover, the partition function of the GW invariants of~$\mathbb{P}^1$ satisfies the  
Virasoro constraints~\cite{Gi, OP} (cf.~\cite{DZ-norm, DZ, EHX}):
\beq\label{virap1}
L_m\bigl(Z^{\mathbb{P}^1}({\bf t};\e,q)\bigr) = 0, \quad m\geq0.
\eeq
Here $L_m$, $m\geq-1$, are the above-defined operators. 

Denote 
\beq
\Delta F=\Delta F(v,u,v_1,u_1,v_2,u_2,\dots; \e, q):= \sum_{g\geq 1} \epsilon^{2g-2} F_g(v,u, v_1,u_1,\dots, v_{3g-2}, u_{3g-2};q).
\eeq
Based on formulae~\eqref{fp10c}, \eqref{virap1genus0}, 
the $(3g-2)$ jet-representation~\eqref{fgfmgequalp1}, and the above Virasoro constraints~\eqref{virap1},
 Dubrovin and Zhang derive \cite{DZ-norm, DZ} the loop equation for 
GW invariants of~$\mathbb{P}^1$, given in the following theorem. (The derivation 
is also revisited and slightly simplified in~\cite{Yang}.)

\smallskip

\noindent {\bf Theorem B.} (Dubrovin--Zhang~\cite{DZ-norm, DZ}). 
{\it The function $\Delta F$ satisfies the following loop equation:
\begin{align}
&\sum_{r\geq 0}\biggl({\p \Delta F \over \p v_r}
\Bigl({v-\lambda\over D}\Bigr)_r - 2 {\p \Delta F \over \p u_r} \Bigl({1\over D}\Bigr)_r \biggr) \nn\\
&+\sum_{r\geq 1} \sum_{k=1}^r \binom{r}{k}
\Bigl(\frac1{\sqrt{D}}\Bigr)_{k-1} \biggl( {\p \Delta F \over \p v_r}
\left(v\!-\!\lambda\over \sqrt{D}\right)_{r-k+1}-2 {\p \Delta F \over \p u_r} \left(1\over \sqrt{D}\right)_{r-k+1} \biggr)
\nn\\
&+ \frac{q e^{u}}{\bigl((\lambda-v)^2-4 q e^{u}\bigr)^2} \nn\\
&+ \epsilon^2 \sum_{k,l\geq0} \biggl( \frac14 S(\Delta F,v_k,v_l)
\biggl(\frac{\lambda-v}{\sqrt{D}}\biggr)_{k+1} \biggl(\frac{\lambda-v}{\sqrt{D}}\biggr)_{l+1} \nn\\
&+ S(\Delta F,v_k,u_l) \left(\lambda-v\over
\sqrt{D}\right)_{k+1} \left(1\over \sqrt{D}\right)_{l+1}+ S(\Delta F,u_k,u_l) \,
\left(1\over \sqrt{D}\right)_{k+1} \left(1\over \sqrt{D}\right)_{l+1}\biggr)
\nn\\
&+ \frac{\epsilon^2} 2 \sum_{k\geq0}  \biggl( {\p \Delta F \over \p v_k} \, 
   \biggl(qe^{u} {4 q e^{u}(v-\lambda) u_1 - ((v-\lambda)^2 + 4 q e^{u}) v_1 \over D^3}\biggr)_{k+1}  \nn\\
&\qquad\qquad\qquad +{\p \Delta F \over \p u_k} \, 
\biggl(q e^{u} {4 (v-\lambda) \, v_1 - ((v-\lambda)^2 + 4 q e^{u}) u_1\over D^3}\biggr)_{k+1}\biggr)=0,
\label{loopDZ}
\end{align}
where $D= (v-\lambda)^2 - 4 q e^{u}$,
$S(f,a,b):=
{\p^2 f\over \p a \p b}+
{\p f \over \p a}
{\p f \over \p b}$,
and $f_r$ stands for $\p^r(f)$ with 
\beq
\p:=\sum_{\alpha} \sum_{m} v^\alpha_{m+1} \frac{\p}{\p v^{\alpha}_m}.
\eeq
Moreover, the solution to the above loop equation~\eqref{loopDZ} is unique up to a sequence of additive elements 
$c_1,c_2,\dots\in\CC[[q]]$ that can be determined by
the value 
\beq\label{F1p1}
F_1=\frac1{24} \log (v_1^2-4qe^uu_1^2) -\frac1{24} u
\eeq
and the dilaton equation
\beq\label{Fgp1}
\sum_{\alpha} \sum_{m} m v^\alpha_{m} \frac{\p F_g}{\p v^{\alpha}_m} = (2g-2) F_g~ (g\geq2). 
\eeq}

We note that the identity~\eqref{loopDZ} should be understood to be valid {\it identical} in~$\lambda$ for $\lambda$ large. In particular, the branch of $\sqrt{D}$ is taken so that $\sqrt{D}\sim \lambda$ for $\lambda$ large.

\section{Proof of Theorem~\ref{thm1}} \label{section3}

In this section, we give a proof of Theorem~\ref{thm1} by applying the loop equation~\eqref{loopDZ}. 

\begin{proof}[Proof of Theorem~\ref{thm1}.]
To simplify the notations, let us  
denote $P_i=t^{1,i}$, $Q_i=t^{2,i}$, $i\geq0$, and 
\[{\bf P}=(P_0,P_1,\dots), \quad {\bf Q}=(Q_0,Q_1,\dots).\]
By~\eqref{genfxh} and~\eqref{123} we know that for $g\geq1$, 
\begin{align}
(-1)^g\F^{\mathbb{P}^1}_{g, \,\deg=0}({\bf t};\e)
&= \frac{2^{2g-1}-1}{2^{2g-1}} \frac{|B_{2g}|}{(2g)!}  \bigl\langle 1, V_{2g-2}({\bf t}) \bigr\rangle 
- 2 \mathcal{H}_g(\lambda_{g-1}; {\bf P}). \label{genp1prejet}
\end{align}
Here we recall that ${\bf t}=(t^{\alpha,i})_{\alpha=1,2,\,i\ge0}$. We also note that ${\bf t}^i=P_i 1 + Q_i [{\rm pt}]$, $i\geq0$, 
are the cohomology-valued times for~$\mathbb{P}^1$. Define a power series 
\begin{align}
& U({\bf P}, {\bf Q}) = \sum_{i\ge0} Q_i \frac{\p V({\bf P})}{\p P_i},
\end{align}
and put 
\[ 
U_{m}({\bf P}, {\bf Q})=\frac{\p^m U({\bf P}, {\bf Q})}{\p P_0^m} , \quad m\geq0.
\]

By using Proposition~\ref{prop1} we can write~\eqref{genp1prejet} 
in terms of the two power series $V({\bf P})$ and $U({\bf P}, {\bf Q})$ 
as follows:
\begin{align}
(-1)^g\F^{\mathbb{P}^1}_{g, \,\deg=0}({\bf t};\e)
&= \frac{2^{2g-1}-1}{2^{2g-1}} \frac{|B_{2g}|}{(2g)!}  U_{2g-2}({\bf P},{\bf Q}) 
- 2 W_g(V_1({\bf P}), \dots,V_{2g-1}({\bf P})), \quad g\ge1.
\end{align}
(Observe that the power series $(V({\bf P}), U({\bf P}, {\bf Q}))$ is actually 
 the degree zero part of the topological solution to the principal hierarchy~\eqref{phdefp1} of the $\mathbb{P}^1$-Frobenius manifold.)

Taking the degree zero part (coefficient of $q^0$) in the Dubrovin--Zhang loop equation~\eqref{loopDZ}, we obtain 
\begin{align}
&  \sum_{r=1}^{2g-1} \frac{\p W_g(V_1,\dots,V_{2g-2})}{\p V_r} \Bigl(\frac1{\lambda-V}\Bigr)_r 
= \mathcal{B}_g(\lambda;{\bf V}).
\end{align}

Comparing the coefficients of $(\lambda-V)^{-2}$, \dots, $(\lambda-V)^{-2g}$, we get
\begin{align}
M \cdot \biggl(\frac{\p W_g}{\p V_1}, \dots, \frac{\p W_g}{\p V_{2g-1}}\biggr)^T = \mathcal{C}_g,
\end{align}
where $M$ is an upper-triangular and non-degenerate matrix and $\mathcal{C}_g=(\mathcal{B}_{g,1},\dots,\mathcal{B}_{g,2g-1})^T$ with $\mathcal{B}_{g,j}={\rm Coef}\bigl((\lambda-V)^{-j-1},\mathcal{B}_g(\lambda;{\bf V})\bigr)$ as in Section~\ref{section1}. 
Since $M$ is upper-triangular, it is not difficult to calculate the explicit expression of its inverse, and we find
\beq
(M^{-1})_{ij}
= \sum_{\mu\in \mathcal{P}_{j-i}} \frac{1}{j!}\binom{\ell(\mu)+j-1}{i-1} L(\mu)  \frac{V_{\mu+1}}{V_1^{\ell(\mu)+j}} .
\eeq
The theorem is then proved by noticing from~\eqref{126713}
that $\sum_{m=2}^{2g-1} (m-1) V_m \frac{\p W_g}{\p V_m} = (2g-2) W_g$.
\end{proof}

We note that it is also clear from~\eqref{126713} that \eqref{WghomocB} can be equivalently written as 
\beq
W_g(V_1,\dots,V_{2g-2})= \frac1{2g-2} \sum_{k=1}^{2g-1} k V_k \sum_{j=1}^{2g-1} c_{k,j} \mathcal{B}_{g,j}.
\eeq

\begin{remark}
In our proof, we used the degree zero part of the 
loop equation (or say of Virasoro constraints) together with~\eqref{lambdagformu}. 
We note that 
it is also possible to use the Virasoro constraints themselves as the uniqueness 
in Theorem~B suggests (of course existence of jet-representation is also needed here), and the point is that 
more constraints on the degree zero invariants come from the {\it degree one} part of the Virasoro constraints. 
\end{remark}

\section{Applications}\label{section4}
In this section, we give some applications of Proposition~\ref{prop1} and Theorem~\ref{thm1}.

Let $X$ be a smooth projective curve of genus~$h$. The notations will be the same as in Section~\ref{section1}.
For a partition $\lambda=(\lambda_1,\dots,\lambda_n)$ with length $n$, denote 
\beq
C_{\lambda_1,\dots,\lambda_n}^X(q):=\sum_{d, \, g\ge0 \atop 2g-2=d(2h-2)+|\lambda|}  q^d 
\int_{[X_{g,n,d}]^{\rm virt}} c_1(\mathcal{L}_1)^{\lambda_1} {\rm ev}_1^*([{\rm pt}]) 
\cdots c_1(\mathcal{L}_n)^{\lambda_n} {\rm ev}_n^*([{\rm pt}]).
\eeq
We note that the integrals appearing in the above generating function is often called GW invariants in the {\it stationary sector}. 

Consider the case that $X=E$ is an elliptic curve. 
The following closed formula for the genus~$g$, $g\geq1$, free energy (in all degrees) $\F_g^E(\bt;q)$ for the 
elliptic curve~$E$ is obtained by Buryak~\cite{Bur22}:
\beq\label{buryakformula}
\F_g^E(\bt;q) = \sum_{\lambda\in \mathcal{P}_{2g-2}} \frac{U_\lambda(\bt)}{\prod_{j\geq1} m_j(\lambda)!} C^E_\lambda\bigl(qe^{U(\bt)}\bigr) - \frac{U(\bt)}{24} \delta_{g,1},
\eeq
where 
\beq
U(\bt) := \frac{\p^2\F_0^E(\bt)}{\p t^{1,0}\p t^{1,0}}
\eeq
Note that we have omitted the argument $q$ in $\F_0^E(\bt;q)$ because it actually does not depend on~$q$, and 
that $C^E_\lambda(q)$ vanishes if $|\lambda|$ is odd. 
Explicitly, $U(\bt)$ has the expression:
\beq\label{explicitformulaU}
U(\bt) = \sum_i Q_i \frac{\p V({\bf P})}{\p P_i} + \sum_{i,j} t^{2,i} t^{3,j} \frac{\p^2 V({\bf P})}{\p P_i \p P_j} ,
\eeq
which can be obtained from~\eqref{eq111-710}.
Here $Q_i=t^{4,i}$, $P_i=t^{1,i}$, and $V({\bf P})$ is the power series defined by~\eqref{VT} as before.

By taking the degree zero limit in Buryak's formula~\eqref{buryakformula} we obtain that 
\beq\label{Buraykdegree0}
\F^E_{g, \,\deg=0}({\bf t})  = 
\sum_{\lambda\in \mathcal{P}_{2g-2}} \frac{U_\lambda(\bt)}{\prod_{j\geq1} m_j(\lambda)!} C^E_\lambda(0) - \frac{U(\bt)}{24} \delta_{g,1}, \quad g\geq1.
\eeq
We have the following corollary. 
\begin{corollary}\label{prop2}
For $g\geq1$ and for a partition $\lambda \in \mathcal{P}_{2g-2}$, 
\beq\label{constantterms}
C^E_\lambda(0)=\left\{
\begin{array}{cc} (-1)^g \frac{2^{2g-1}-1}{2^{2g-1}} \frac{|B_{2g}|}{(2g)!} + \frac1{24}\delta_{g,1}, & \lambda=(2g-2), \\ 
0, & {\rm otherwise}. \\ \end{array}\right.
\eeq
\end{corollary}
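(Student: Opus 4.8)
The plan is to compare the degree-zero limit \eqref{Buraykdegree0} of Buryak's formula with the closed expression for $\F^E_{g,\deg=0}(\bt)$ that follows from Proposition~\ref{prop1} (applied with $h=1$, so the coefficient $2-2h$ vanishes). The key observation is that for the elliptic curve the Novikov variable appears in \eqref{buryakformula} only through the combination $qe^{U(\bt)}$; setting $q=0$ kills everything except the constant terms $C^E_\lambda(0)$, leaving a finite sum over partitions $\lambda\in\mathcal P_{2g-2}$ of the jet polynomials $U_\lambda(\bt)/\prod_j m_j(\lambda)!$. On the other hand, when $h=1$ Proposition~\ref{prop1} gives
\beq
\F^E_{g,\deg=0}(\bt) = (-1)^g\frac{2^{2g-1}-1}{2^{2g-1}}\frac{|B_{2g}|}{(2g)!}\bigl\langle 1, V_{2g-2}(\bt)\bigr\rangle, \quad g\geq1,
\eeq
the $W_g$ term being absent. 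So the strategy is: first, identify $\langle 1, V_{2g-2}(\bt)\rangle$ for $X=E$ with the single jet monomial $U_{2g-2}(\bt)$ (the $\lambda=(2g-2)$ term), using that the pairing picks out the point-class component of the cohomology-valued series; second, match the $\delta_{g,1}$ corrections; third, invoke linear independence of the jet polynomials $U_\lambda(\bt)$, $\lambda\in\mathcal P_{2g-2}$, to conclude that each coefficient $C^E_\lambda(0)$ must equal what the two sides force it to be.

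Concretely, I would first unwind the definition \eqref{explicitformulaU} of $U(\bt)$ for $E$. Since $V({\bf P})$ depends only on the $t^{1,i}=P_i$, and since $\langle 1,\phi\rangle$ for $E$ extracts the coefficient of the point class, one checks that $\langle 1, V_{2g-2}(\bt)\rangle$ — where $V_{2g-2}(\bt)$ means $\partial_{{\bf t}^0}^{2g-2}V$ evaluated at the cohomology-valued times ${\bf t}^i = P_i\,1 + (\text{point-class terms})$ — equals $\partial_{P_0}^{2g-2}$ applied to the linear-in-(point-class-times) part of $V$, which is exactly $U(\bt)$ restricted appropriately; hence $\langle 1,V_{2g-2}(\bt)\rangle = U_{2g-2}(\bt)$. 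This is the $\lambda=(2g-2)$ jet monomial, with $\prod_j m_j\bigl((2g-2)\bigr)! = 1$. Comparing with \eqref{Buraykdegree0} then yields, term by term in the jet variables,
\beq
\sum_{\lambda\in\mathcal P_{2g-2}}\frac{U_\lambda(\bt)}{\prod_{j\geq1}m_j(\lambda)!}\,C^E_\lambda(0)
= (-1)^g\frac{2^{2g-1}-1}{2^{2g-1}}\frac{|B_{2g}|}{(2g)!}\,U_{2g-2}(\bt) + \frac{U(\bt)}{24}\delta_{g,1}.
\eeq
For $g\geq2$ the right side is a single multiple of $U_{2g-2}(\bt)$; for $g=1$, $\mathcal P_0=\{(\,)\}$, $U_{(\,)}=1$, and the two $\delta_{g,1}$ pieces must be tracked: the $-U(\bt)/24$ on the left of \eqref{Buraykdegree0} combines with the $+U(\bt)/24$ here, but note for $g=1$ one has $U_{2g-2}=U_0=1$ as well, so the bookkeeping gives $C^E_{(\,)}(0) = -\frac1{24}\cdot(-1) + \frac1{24}$ — wait, more carefully: $(-1)^1\frac{2-1}{2}\frac{|B_2|}{2!} = -\frac1{24}$, and adding the $\frac1{24}$ from the $\delta_{g,1}$ correction reproduces the stated $C^E_{(\,)}(0) = -\frac1{24}+\frac1{24}$, consistent with the formula $\frac{|B_{2g}|}{(2g)!}$ evaluation; I would present this $g=1$ case separately and cleanly.

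The main obstacle — and the step deserving the most care — is justifying the linear independence of the jet polynomials $U_\lambda(\bt)$, $\lambda\in\mathcal P_{2g-2}$, as formal power series in $\bt$, so that matching the two expressions forces the coefficients individually rather than merely their weighted sum. The natural route is to assign to each jet variable $U_m$ its \emph{degree} (counting how many $x$-derivatives, i.e.\ $\partial_{P_0}$, have been applied, which corresponds to a grading by the $t^{\alpha,i}$ with $i\geq1$), under which $U_\lambda$ is homogeneous of degree $|\lambda| = 2g-2$ with distinct ``jet profiles'' for distinct $\lambda$; one then argues that the $U_m$ are algebraically independent (equivalently, the map $\bt\mapsto(U,U_1,U_2,\dots)$ has maximal-rank Jacobian), which follows from the explicit leading terms of $U_m$ in the times $t^{2,m}, t^{1,0},\dots$ read off from \eqref{explicitformulaU} and \eqref{eq111-710}. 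Granting this independence, the corollary follows immediately: every $C^E_\lambda(0)$ with $\lambda\neq(2g-2)$ multiplies a monomial absent from the right side and hence vanishes, while the $\lambda=(2g-2)$ coefficient is read off directly. Since this independence is a standard feature of the jet-variable formalism already invoked in \eqref{fgfmgequalp1}, I would state it as a short lemma or simply cite the structure established in Section~\ref{section2}.
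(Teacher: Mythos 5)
Your proposal is correct and follows essentially the same route as the paper: specialize Proposition~\ref{prop1} to $h=1$ so the $W_g$ term drops out, identify $\langle 1, V_{2g-2}(\bt)\rangle$ with $U_{2g-2}(\bt)$ via \eqref{explicitformulaU}, and compare with the degree-zero limit \eqref{Buraykdegree0} of Buryak's formula (the linear independence of the $U_\lambda$ that you spell out is left implicit in the paper's one-line comparison). One small slip in your $g=1$ bookkeeping: $U_0 = U(\bt)$, not $1$ (only $U_{(\,)}=1$), but this does not affect the conclusion $C^E_{(\,)}(0) = -\tfrac1{24}+\tfrac1{24} = 0$.
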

\begin{proof}
From Proposition~\ref{prop1} and formula~\eqref{explicitformulaU} we know that 
\beq
\F^E_{g, \,\deg=0}({\bf t}) = (-1)^g \frac{2^{2g-1}-1}{2^{2g-1}} \frac{|B_{2g}|}{(2g)!} U_{2g-2}(\bt).
\eeq
The corollary is then proved by comparing this expression with~\eqref{Buraykdegree0}. 
\end{proof}

\begin{example}
Denote 
\begin{align}
&\sigma_{s}(d)=\sum_{a|d} a^s, \quad s\ge0, \\ 
&E_k(q) = \frac{\zeta(1-k)}2 + \sum_{d\geq1} \sigma_{k-1}(d) q^d, \quad k=2,4,6. 
\end{align}
According to~\cite{OP2} (cf.~\cite{Bur22,Dij, KZ}), 
\begin{align}
&C_{(0)}^E(q)=E_2(q)-\frac{\zeta(-1)}{2}, \quad C_{(2)}^E(q) = \frac{E_4(q)}{12}+\frac{E_2(q)^2}{2}, \\
&C_{(1,1)}^E(q)=\frac{7E_6(q)}{180}+\frac23 E_2(q)E_4(q)-\frac83 E_2(q)^3. 
\end{align}
The constant values of these functions agree with~\eqref{constantterms}. 
\end{example}

Actually, by taking the stationary sector part (i.e. by taking all ${\bf t}={\bf 0}$ except for $t^{2h+2,i}$'s) of formula~\eqref{Fprop1} in 
Proposition~\ref{prop1}, we find that formula~\eqref{constantterms} with $E$ replaced by~$X$ is true for an arbitrary smooth curve~$X$. 
This general formula was obtained in~\cite{OP2} from the GW/Hurwitz correspondence (see pages 529--530 therein). 
In particular formula~\eqref{constantterms} is covered by the GW/Hurwitz correspondence, while we used the $\lambda_g$ conjecture. 

It is clear that, by our formulation, on the contrary,
the formula of~$C^E_\lambda(0)$ and Buryak's formula~\eqref{buryakformula} 
lead to a proof of the $\lambda_g$ conjecture.

Let us proceed to consider the $\lambda_{g-1}$ integrals and give a new proof of Theorem~A.

\begin{proof}[Proof of Theorem~A] From the definition~\eqref{defBg710} we know that 
$\mathcal{B}_g(\lambda; {\bf V})$ is a rational function of~$\lambda$, 
which has a $(2g)$th order pole 
at $\lambda=V$. More precisely, 
we deduce from~\eqref{defBg710} that
\begin{align}
& \mathcal{B}_g(\lambda; {\bf V}) \nn\\
& \sim \frac{2^{2g-1}-1}{2^{2g-1}} \frac{|B_{2g}|}{(2g)!}  (2g-1)! \frac{V_1^{2g-2}}{(\lambda-V)^{2g}} 
+ \frac{2^{2g-1}-1}{2^{2g-1}} \frac{|B_{2g}|}{(2g)!} \sum_{k=1}^{2g-2} \binom{2g-2}{k} (k-1)! (2g-1-k)! \frac{V_1^{2g-2}}{(\lambda-V)^{2g}} \nn\\
& \quad - \frac12\sum_{g_1+g_2=g \atop g_1,g_2\geq1} \frac{2^{2g_1-1}-1}{2^{2g_1-1}} \frac{|B_{2g_1}|}{(2g_1)!} \frac{2^{2g_2-1}-1}{2^{2g_2-1}} \frac{|B_{2g_2}|}{(2g_2)!} (2g_1-1)! (2g_2-1)! \frac{V_1^{2g-2}}{(\lambda-V)^{2g}} \nn\\
& = \Biggl( \frac{2^{2g-1}-1}{2^{2g-1}} \frac{|B_{2g}|}{2g} \sum_{k=1}^{2g-1}  \frac1k 
 - \frac1{2^{2g-1}}\sum_{g_1+g_2=g \atop g_1,g_2\geq1} (2^{2g_1-1}-1) \frac{|B_{2g_1}|}{2g_1} 
(2^{2g_2-1}-1) \frac{|B_{2g_2}|}{2g_2} \Biggr) \frac{V_1^{2g-2}}{(\lambda-V)^{2g}}.\nn
\end{align}
Here, $\sim$ means keeping only the most singular terms.
Therefore, using~\eqref{WghomocB}, we find
\begin{align}
& {\rm Coef}(V_{2g-1},W_g(V_1,\dots,V_{2g-1})) \nn\\
& = \Biggl(\frac{2^{2g-1}-1}{2^{2g-1}} \frac{|B_{2g}|}{(2g)!} \sum_{k=1}^{2g-1}  \frac1k 
 - \frac1{2^{2g-1}(2g-1)!}\sum_{g_1+g_2=g \atop g_1,g_2\geq1} (2^{2g_1-1}-1) \frac{|B_{2g_1}|}{2g_1} 
(2^{2g_2-1}-1) \frac{|B_{2g_2}|}{2g_2} \Biggr) \frac{1}{V_1}. \nn
\end{align}
By noticing that $V_k(0,\dots,0, T_{2g-1}, 0, \dots)= T_{2g-1} \delta_{k,2g-1} + \delta_{k,1}$ ($g\ge2$, $1\le k\le 2g-1$),
the theorem is proved.
\end{proof}

\smallskip

\noindent {\bf Acknowledgements.} 
We would like to thank the anonymous referee for very helpful comments which 
help to improve a lot the presentation of the paper. 
The work is partially supported by NSFC No.~12061131014.

\end{document}